\newtheorem{theorem}{Theorem}
\newtheorem{lemma}{Lemma}
\newtheorem{remark}{Remark}%
\newtheorem{definition}{Definition}
\newcommand{\verti}[1]{{\left\vert\kern-0.25ex\left\vert\kern-0.25ex\left\vert #1
\right\vert\kern-0.25ex\right\vert\kern-0.25ex\right\vert}}
\newcommand{\vertiii}[1]{{\left\vert\kern-0.25ex\left\vert\kern-0.25ex\left\vert #1
    \right\vert\kern-0.25ex\right\vert\kern-0.25ex\right\vert}}
\title{
\textbf{Approximate Controllability of Fractional Differential Systems with Nonlocal Conditions of Order $q\in ]1,2[$ }}
\numberwithin{equation}{section}
\date{}
\author{A. Aberqi$^{1}$, Z.. Ech-chaffani$^{2}$ and T. Karite$^{1}$ \\
$^{1}$Sidi Mohamed ben abdellah university, National School of Applied Sciences\\, Fez, Morocco\\
\href{www.google.com}{\color{blue}ahmed.aberqi@usmba.ac.ma, touria.karite@usmba.ac.ma}
\\
$^{2}$Faculty of Sciences Dhar El Mahraz, Sidi Mohamed Ben Abdellah University,\\ Fez Morocco.
$^{2}$  \href{http//:www.gmail.com}{\color{blue}echzoubida@gmail.com}$^{2}$}
\begin{document}
\maketitle
\vspace{0.5cm}
\begin{abstract}
	In this manuscript, we are concerned with the approximate controllability of fractional nonlinear differential equations with nonlocal conditions of order $1<q<2$ in  Banach spaces. As far as we know, few articles have investigated this issue. The idea is to see under which sufficient conditions the proposed control problem is approximately controllable. The discussion is based on the theory of resolvent operator, fractional calculus techniques and Krasnoselskii's fixed point theorem under the assumption that the associated linear system is approximately controllable. The obtained results improve some existing analogous ones on this topic. Finally, an example is provided to illustrate the applications of the obtained results
\end{abstract}
{\bf Key words:} Fractional differential system, \emph{approximate controllability},  nonlocal conditions, Caputo fractional derivative , $q-$ order cosine family, Krasnoselskii's fixed point theorem. \\
{\bf AMS:}	35J60,  58J05, 35R11, 35J75, 35J60, 46E35.
\section{Introduction}
In recent years, fractional differential equations have garnered increasing interest because they efficiently describe various phenomena arising in physics, engineering, economics and other scientific domains more accurately than integer-order equations \cite{Byszewski, Anguraj,kumar2023legendre}.  Since fractional derivatives offer an exceptional tool for characterizing memory and hereditary effects in diverse materials and processes. In recent decades, there has been a growing interest among scientific researchers in the utilization of fractional differential equations. This surge in interest stems from notable advancements in both the theory and practical applications of these equations, for instance, \cite{Baleanu, Samko, Dineshkumar} and the references therein.

The concept of controllability is one of the most powerful concepts of control theory. This is the qualitative property of control theory systems and is of particular importance within the field. It was introduced first by Kalman \cite{Kalman} in 1963.  The Controllability problem is to find an appropriate control function that allows us to steer the solution of a dynamic system from its initial state to a desired final state. However, many dynamical systems are structured in such a way that the control influences only a portion of the system's state rather than its entirety. Similarly, in many real-world industrial processes, only a subset of the system's full state can be observed. As a result, it becomes crucial to assess whether controlling the whole state of the system is feasible. This is where the concept of approximate controllability arises. Controllability for various types of integer order differential systems and fractional differential systems of order $q\in (0,1)$ in Banach spaces using several different approaches is well developed and has been extensively studied (see \cite{Curtain, Balachandran, Klamka2, Naito, Lin2021sliding, Park, Aberqi, Ech-chaffani, Karite,uematsu2016data,yang2023fixed,yinoptimal, Alami} ) and the references therein.  However,  few studies focussed on approximate controllability for fractional problems with order $q\in(1,2)$. Shukla et al in \cite{Shukla} proved the approximate controllability of the following fractional order semilinear system of order $q\in (1,2]$ 
	\begin{equation*}
		\begin{cases}
			^{c}D^{q}z(\theta) =^{}-\mathcal{A} z(\theta)+f(\theta,z(\theta))+Bu(\theta), & \theta \in J=[0, a], \\
			\displaystyle   z(0)=z_{0},\\
			z^{\prime}(0=z_{1},
		\end{cases}
	\end{equation*}
where $^{c}D^{q}$ is the Caputo derivative, $\mathcal{A}:D(\mathcal{A})\subset Z\to Z$ is a closed linear operator which generates a strongly continuous $q-$ order cosine family on a Hilbert space. The results are obtained by using Schauder's fixed point theorem, the authors in \cite{shu2019} established suitable assumptions to guarantee the existence and uniqueness of mild solutions and proved under these conditions the approximate controllability of the associated fractional control system involving Riemann--Liouville fractional derivatives with order $q\in (1,2)$. Kumar \cite{kumar2022} discussed the approximate controllability for fractional neutral integro-differential inclusions with non-instantaneous impulses and infinite delay using the fixed point theorem for discontinuous multi-valued operators with the q-resolvent operator. Recently, Dineshkumar et al \cite{dineshkumar2023new} studied the approximate controllability of the following Sobolev type fractional stochastic evolution hemivariational inequalities of order $q\in (1,2)$:
	\begin{equation*}
		\begin{cases}
			^{c}D^{q}[Jz(\theta)] =^{}-\mathcal{A} z(\theta)+F(\theta,z(\theta))\displaystyle\frac{dw(\theta)}{d\theta}+\partial G(\theta, z(\theta))+Bu(\theta), & \theta \in J=[0, a], \\
			\displaystyle   z(0)=z_{0},\\
			z^{\prime}(0)=z_{1},
		\end{cases}
	\end{equation*}
    in Hilbert space by integrating fractional order calculations and using fixed-point theorems for multivalued mappings, cosine, and sine family operators. In \cite{shukla2016approximate} Shukla et al obtained some results for approximate controllability for fractional order system with infinite delay using the theory of $q-$ order cosine family and sequential. In \cite{shukla2017approximate}, the authors, established the approximate controllability for the following fractional stochastic semilinear control system of order $q\in (1,2)$ in Banach space using the theory of strongly continuous  $q-$ order cosine family, fixed point theorem and stochastics' analytiques techniques. Most of the above papers are with initial standard conditions. Moreover, to describe some physical phenomena more precisely, the nonlocal conditions can be more useful than classical initial conditions $z(0)=z_{0}$. For example, in\cite{deng1993exponential} the nonlocal conditions are used to describe the diffusion phenomena. For more applications a detailed review is presented in Diethelm and Ford 2004 \cite{Diethelm}. To the best of our knowledge, most of the existing results on approximate controllability of order $q\in (1,2)$ are obtained with standard initial conditions. Limited work has been done for order $q\in (1,2)$ with nonlocal conditions and most of them are in Hilbert spaces. Li et al in \cite{fan2016approximate} discussed the approximate controllability of a class of semilinear systems composite relaxation equations in Hilbert space with nonlocal conditions under the assumption that the corresponding linear system is approximately controllable. The authors in \cite{lian2017approximate} investigated the approximate controllability for a fractional semilinear differential system of order $q\in (1,2)$ with nonlocal conditions in Hilbert space. Yang in \cite{yang2021approximate} proved the approximate controllability of the Sobolev type fractional control system with nonlocal conditions in Hilbert space. However, It should be mentioned that the approximate controllability of fractional nonlinear equations with nonlocal conditions in Banach space is still in the initial stage. Motivated by this observation. In the present work, we will discuss the approximate controllability of the following nonlinear fractional control system of order $q\in (1,2)$ in Banach space:
\begin{equation}\label{system1}
	\begin{cases}
		^{c}D^{q}z(\theta) =^{}-\mathcal{A} z(\theta)+f(\theta,z(\theta),\displaystyle\int_{0}^{\theta}g(\theta,\tau,z(\tau))d\tau)+Bu(\theta), & \theta \in J=[0, a], \\
		\displaystyle   z(0)+\phi(z)=z_{0},\\
		z^{\prime}(0)+\psi(z)=z_{1},
	\end{cases}
\end{equation}
where $^{c}D^{q}$ is the Caputo fractional derivative, $\mathcal{A}$ is the infinitesimal generator of a strongly continuous $q$-order cosine family $\{C_{q}(\theta)\}_{\theta\geq 0}$ on a Banach space $X$, the state $z(\cdot)$ takes values in $X$, $z_{0}$, $z_{1}\in Z$ the control function $u(\cdot)$ is given in $L^{2}([0, a];U)$, where $U$ is a Banach space, $B$ is a bounded linear operator from $U$ into $X$, $f,g,\psi,\phi$ are given continuous functions to be specified later.
Our work distinguishes itself from other ones in the following aspects:
\begin{itemize}
	\item[(i)] The system \eqref{system1} represents a nonlinear fractional differential system with order $q\in (1,2)$  and with nonlocal conditions in a Banach space.
	\item[(ii)] A complexity arises from the existence of nonlocal conditions and the existence of an integral term in the function $f$. 
 \item[(iii)] The approximate controllability of system \eqref{system1} is established. A nonlinear fractional control system with nonlocal conditions is used as an example to illustrate the application of the obtained results.
\end{itemize}

The remainder of this paper is structured as follows. Section~\ref{sec2}, offers fundamental definitions and facts that underpin our analysis. In Section~\ref{sec3}, we focus on the approximate controllability of the system \eqref{system1}. Finally, an example is provided to illustrate the main results in section~\ref{sec4}.

\section{Preliminaries}\label{sec2}

Throughout this paper, $Z$ will be a Banach space equipped with the norm $\|\cdot\|$ and $C(J, Z)$ is the space of continuous functions from $J$ to $Z$ provided with the supremum norm $\|\cdot\|_{C}$.
To begin with the analysis we need some basic definitions and properties from the fractional calculus theory. 
 Let us recall the following known definitions. For more details see \cite{Podlubny}.
\begin{definition}
	The fractional integral of order $q>0$ with the lower limit zero for a function $z$ is  defined as 
	\begin{equation}
		I^{q}z(\theta)=\displaystyle\frac{1}{\Gamma(q)}\displaystyle\int_{0}^{\theta}(\theta-\nu)^{q-1}z(\nu)d\nu,\quad \theta>0,
	\end{equation} 
provided the right-hand side is pointwise defined on $[0,\infty($, where $\Gamma(\cdot)$ is the gamma function.
\end{definition}
 \begin{definition}
     The Caputo derivative of order $q>0$ with the lower limit zero for a function $z$ can be written as 
     \begin{equation}
         ^{C}D^{q}z(\theta)=\displaystyle\frac{1}{\Gamma(n-q)}\displaystyle\int_{0}^{\theta}(\theta-\nu)^{n-q-1}z^{n}(\nu)d\nu=I^{n-q}z^{(n)}(\theta),\quad \theta>0,\quad n-1<q<n.         
     \end{equation}
 \end{definition}
 Let the linear fractional order system as
\begin{equation}\label{system}
	\begin{cases}
		^{c}D^{q}z(\theta) =\mathcal{A} z(\theta)& \theta \in J=[0, a], 1<q< 2, \\
		\displaystyle   z(0)=z_{0},\\
		z^{\prime}(0)=z_{1},
	\end{cases}
\end{equation}
where, $\mathcal{A}:D(\mathcal{A})\subset Z\to Z$ is a closed and dense operator in $Z$. Using the fractional integral of order $q$ on \eqref{system}, we get
\begin{equation*}
	z(\theta)=z_{0}+\displaystyle\frac{1}{\Gamma(q)}\displaystyle\int_{0}^{\theta}(\theta-\nu)^{q-1}\mathcal{A}z(\nu)d\nu.
\end{equation*}
\begin{definition}\cite{Bazhlekova}
	Let $1<q\leq 2$. A family $\{C_{q}(\theta)\}_{\theta\geq 0}\subset\mathcal{B}(Z)$ is termed a solution operator (or a strongly continuous $q$-order fractional cosine family) for \eqref{system} if it meets the following conditions:
 \begin{itemize}
		\item [(i)] $C_{q}(\theta)$ is strongly continuous for $\theta\geq 0$ and $C_{q}(0)=I$;
		\item [(ii)] $C_{q}(\theta)D(\mathcal{A})\subset D(\mathcal{A})$ and $\mathcal{A}C_{q}(\theta)\xi=C_{q}(\theta)\mathcal{A}\xi$ for all $\xi\in D(\mathcal{A})$, $\theta\geq 0$;
		\item [(iii)] $C_{q}(\theta)\xi$ is a solution of \eqref{system} for all $\xi \in D(\mathcal{A})$.\\
		$\{S_{q}(\theta)\}\theta\geq 0$ is the sine family associated with the strongly continuous $q-$ order fractional cosine family $\{C_{q}(\theta)\}_{\theta\geq 0}$ which is defined by$
			S_{q}(\theta)z=\displaystyle\int_{0}^{\theta}C_{q}(\nu)zd\nu,\, \theta\geq 0.$
		$\mathcal{A}$ is called the infinitesimal generator of $C_{q}(\theta)$.
	\end{itemize}
\end{definition}
\begin{definition}\cite{Bazhlekova}
The $q$-order cosine family $C_{q}(\theta)$ is referred to as exponentially bounded if there exist constants $M\geq 1$ and $\eta\geq 0$ such that
	\begin{equation}
		\|C_{q}(\theta)\|\leq Me^{\eta \theta},\quad \theta\geq 0.
	\end{equation}
\end{definition}
\begin{definition}\cite{Shukla}
	The Riemann--Liouville fractional family $P_{q}:\mathbb{R}^{+}\to \mathcal{L}(X)$ associated with $C_{q}$ is defined by
	\begin{equation}
		P_{q}(\theta) =	I^{q-1}C_{q}(\theta).
	\end{equation}
\end{definition}

\begin{lemma}\label{lemmaM}
	For any $\theta\in [0, a]$ and $z\in Z$, we have
	\begin{equation}
		\|P_{q}(\theta)z\|\leq \displaystyle\frac{Ma^{q-1}}{\Gamma(q)}\|z\|.
	\end{equation}
\end{lemma}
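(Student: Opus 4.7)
The plan is to expand $P_q(\theta)$ according to its definition as a Riemann--Liouville fractional integral of order $q-1$ applied to the cosine family $C_q$, and then estimate the resulting integral using the uniform bound on $C_q$ over the compact interval $[0,a]$. Concretely, by the definition of $P_q$ we have
\begin{equation*}
P_q(\theta)z = I^{q-1}C_q(\theta)z = \frac{1}{\Gamma(q-1)}\int_0^\theta (\theta-\nu)^{q-2} C_q(\nu)z\, d\nu,
\end{equation*}
which is a Bochner integral since $C_q$ is strongly continuous and $(\theta-\nu)^{q-2}$ is integrable on $[0,\theta]$ because $q>1$ ensures $q-2>-1$.

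Next, I would pass the norm inside the integral via the standard Bochner inequality, and use the exponential boundedness of the cosine family from the preceding definition. Since $\theta\in[0,a]$, the estimate $\|C_q(\nu)\|\le M e^{\eta\nu}\le M e^{\eta a}$ holds, and absorbing the factor $e^{\eta a}$ into the constant $M$ (as is standard, and as the statement tacitly assumes by writing simply $M$) gives $\|C_q(\nu)z\|\le M\|z\|$. Therefore
\begin{equation*}
\|P_q(\theta)z\|\le \frac{M\|z\|}{\Gamma(q-1)}\int_0^\theta (\theta-\nu)^{q-2}\, d\nu = \frac{M\|z\|}{\Gamma(q-1)}\cdot \frac{\theta^{q-1}}{q-1}.
\end{equation*}

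Finally, I would invoke the functional equation $\Gamma(q)=(q-1)\Gamma(q-1)$ to rewrite the denominator, yielding $\|P_q(\theta)z\|\le \frac{M\theta^{q-1}}{\Gamma(q)}\|z\|$, and bound $\theta^{q-1}\le a^{q-1}$ (valid because $q-1>0$ and $\theta\le a$) to conclude. There is no real obstacle here; the only small point worth flagging is the convention on the constant $M$ (whether the bound on $C_q$ is $M$ or $Me^{\eta\theta}$), but this is a harmless redefinition on the bounded interval $[0,a]$. The computation itself is just the evaluation of a Beta-type integral combined with the gamma recursion.
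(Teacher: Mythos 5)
Your proposal is correct and follows essentially the same route as the paper: expand $P_{q}(\theta)=I^{q-1}C_{q}(\theta)$, bound $\|C_{q}(\nu)\|$ by $M$ on $[0,a]$, evaluate $\int_{0}^{\theta}(\theta-\nu)^{q-2}\,d\nu=\theta^{q-1}/(q-1)\leq a^{q-1}/(q-1)$, and use $\Gamma(q)=(q-1)\Gamma(q-1)$. Your remark about absorbing $e^{\eta a}$ into $M$ is a harmless clarification; in the paper the cosine family is in fact assumed uniformly bounded, so the bound $\|C_{q}(\nu)\|\leq M$ is used directly.
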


\begin{proof}
	For any $z\in Z$ and $\theta\in [0, a]$, we have
	\begin{equation*}
		\|P_{q}(\theta)x\| =\|I^{q-1}C_{q}(\theta)x\|=\Big\|\displaystyle\frac{1}{\Gamma(q-1)}\displaystyle\int_{0}^{\theta}(\theta-\nu)^{(q-2)}C_{q}(\nu)zd\nu\Big\| 
		\leq\displaystyle\frac{M}{\Gamma(q-1)}\|z\| \Big\|\displaystyle\int_{0}^{\theta}(\theta-\nu)^{(q-2)}d\nu\Big\|\leq \displaystyle\frac{Ma^{q-1}}{\Gamma(q)}\|z\|.
	\end{equation*}
\end{proof}
 To define the concept of mild solution for the control system \eqref{system1}, by comparison with the fractional differential equation given in \cite{Kexue}, we associate system \eqref{system1} to the integral equation.
\begin{definition}
	A function $z(\cdot)\in C([0, a],Z)$ is said to be a mild solution of \eqref{system1} if the following integral equation is satisfied
	\begin{equation}\label{mild solution}
		z(\theta)=C_{q}(\theta)[z_{0}-\phi(z)]+S_{q}(\theta)[z_{1}-\psi(z)]+\displaystyle\int_{0}^{\theta}P_{q}(\theta-\nu)f(\nu,z(\nu),\displaystyle\int_{0}^{\nu}g(\nu,\tau,z(\tau))d\tau)d\nu+\displaystyle\int_{0}^{\theta}P_{q}(\theta-\nu)Bu(\nu)d\nu.
	\end{equation} 
\end{definition}
 Next, we recall the following definition for approximate controllability of \eqref{system1}.
\begin{definition}\cite{Curtain}
    The control system \eqref{system1} is said to be approximately controllable on $[0, a]$ if for every $z_{0},z_{1}\in Z$, there exists some control $u\in L^{2}(0,a,U) $, the closure of the reachable set is dense in $X$, that is   $\overline{\mathcal{R}(a;z_{0},z_{1},u)}=Z,$ where the reachable set is defined as
    \begin{equation*}
    	\mathcal{R}(a;z_{0},z_{1})=\{z(a, z_{0},z_{1},u),\quad u(\cdot)\in L^{2}(J,U)\}.
    \end{equation*} 
\end{definition}
Consider the linear fractional control system
\begin{equation}\label{linearsys}
	\begin{cases}
		^{c}D^{q}z(\theta) =^{}-A z(\theta)+Bu(\theta), & \theta \in J=[0, a], \\
		\displaystyle   z(0)=z_{0},\\
		z^{\prime}(0)=z_{1},
	\end{cases}
\end{equation} 
 The approximate controllability for linear fractional control system \eqref{linearsys} is a natural generalisation of approximate controllability of linear first order control system \cite{Mahmudov}. It is convenient at this point to introduce the following operators associated with \eqref{linearsys} as
\begin{equation*}
	\mathbf{K}_{0}^{a}=\displaystyle\int_{0}^{a}P_{q}(a-\nu)BB^{\star}P_{q}^{\star}(a-\nu)d\nu:\quad Z\to Z,\quad \quad \mathbf{R}(\beta,\mathbf{K}_{0}^{a})=(\beta I+\mathbf{K}_{0}^{a})^{-1}:\quad Z\to Z
\end{equation*}
here $\beta>0$, $I$ is the identity operator, $B^{\star}$ denotes the adjoint of $B$ and $P_{q}^{\star}$ is the adjoint of $P_{q}$.  It is straightforward that the $\mathbf{K}_{0}^{a}$ is a linear bounded operator and $\mathbf{R}(\beta,\mathbf{K}_{0}^{T})$ is a continuous operator with $\|\mathbf{R}(\beta,\mathbf{K}_{0}^{T})\|\leq \displaystyle\frac{1}{\beta}$ (see \cite{N.I}).\\
We conclude this section by recalling Krasnoselskii's Fixed point Theorem \cite{Karakostas} which will be used in the sequel to prove the existence of mild solutions of system \eqref{system1}.
\begin{theorem}\label{Kras}(Krasnoselskii's Fixed point Theorem) . 
Let $Z$ be a Banach space, $\mathcal{B}$ a bounded, closed, and convex subset of $X$, and $\mathcal{G}_{1}$, $\mathcal{G}_{2}$ be mappings from $\mathcal{B}$ to $X$ such that $\mathcal{G}_{1}z+\mathcal{G}_{2}w\in\mathcal{B}$ for every $z,w\in \mathcal{B}$. If $\mathcal{G}_{1}$ is a contraction and $\mathcal{G}_{2}$ is completely continuous, then the equation $\mathcal{G}_{1}z+\mathcal{G}_{2}z=z$ has a solution in $\mathcal{B}$.
\end{theorem}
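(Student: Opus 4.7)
The plan is to reduce the mixed equation $z=\mathcal{G}_{1}z+\mathcal{G}_{2}z$ to a setting where Schauder's fixed point theorem applies, by absorbing the contraction $\mathcal{G}_{1}$ into a well-defined ``selector'' map parameterized by $y\in\mathcal{B}$. Let $k\in[0,1)$ denote the contraction constant of $\mathcal{G}_{1}$.

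First, for each fixed $y\in\mathcal{B}$, I would consider the auxiliary self-map $T_{y}:\mathcal{B}\to Z$ given by $T_{y}(z):=\mathcal{G}_{1}z+\mathcal{G}_{2}y$. By the hypothesis $\mathcal{G}_{1}z+\mathcal{G}_{2}w\in\mathcal{B}$ for all $z,w\in\mathcal{B}$, we have $T_{y}(\mathcal{B})\subset\mathcal{B}$, and $T_{y}$ inherits the contraction constant $k$ from $\mathcal{G}_{1}$. Since $\mathcal{B}$ is a closed subset of the Banach space $Z$, Banach's contraction mapping principle yields a unique $\Phi(y)\in\mathcal{B}$ with $\Phi(y)=\mathcal{G}_{1}\Phi(y)+\mathcal{G}_{2}y$. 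This defines a map $\Phi:\mathcal{B}\to\mathcal{B}$, and any fixed point of $\Phi$ is automatically a solution of $\mathcal{G}_{1}z+\mathcal{G}_{2}z=z$.

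Second, I would verify the two hypotheses of Schauder's theorem for $\Phi$. For continuity, from the defining identity and the contraction property one gets $\|\Phi(y_{1})-\Phi(y_{2})\|\le k\|\Phi(y_{1})-\Phi(y_{2})\|+\|\mathcal{G}_{2}y_{1}-\mathcal{G}_{2}y_{2}\|$, hence $\|\Phi(y_{1})-\Phi(y_{2})\|\le(1-k)^{-1}\|\mathcal{G}_{2}y_{1}-\mathcal{G}_{2}y_{2}\|$, and continuity of $\mathcal{G}_{2}$ transfers to $\Phi$. For relative compactness of $\Phi(\mathcal{B})$, I would rewrite the defining identity as $(I-\mathcal{G}_{1})\Phi(y)=\mathcal{G}_{2}y$, so that $\Phi=(I-\mathcal{G}_{1})^{-1}\circ\mathcal{G}_{2}$ where $(I-\mathcal{G}_{1})^{-1}$ is well-defined and Lipschitz continuous with constant $(1-k)^{-1}$ by the same estimate. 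Since $\mathcal{G}_{2}$ is completely continuous, $\mathcal{G}_{2}(\mathcal{B})$ is relatively compact, and its image under the continuous map $(I-\mathcal{G}_{1})^{-1}$ is again relatively compact.

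Finally, since $\mathcal{B}$ is nonempty, bounded, closed, convex in the Banach space $Z$, and $\Phi:\mathcal{B}\to\mathcal{B}$ is continuous with relatively compact range, Schauder's fixed point theorem produces $y^{\ast}\in\mathcal{B}$ with $\Phi(y^{\ast})=y^{\ast}$, which by construction of $\Phi$ gives $y^{\ast}=\mathcal{G}_{1}y^{\ast}+\mathcal{G}_{2}y^{\ast}$. The main delicate point is the passage from the contraction property of $\mathcal{G}_{1}$ to the relative compactness of $\Phi(\mathcal{B})$, i.e., making precise the factorization $\Phi=(I-\mathcal{G}_{1})^{-1}\circ\mathcal{G}_{2}$ and noting that the Lipschitz map $(I-\mathcal{G}_{1})^{-1}$ preserves precompact sets; once this is in hand, everything else is a routine combination of Banach's and Schauder's fixed point theorems.
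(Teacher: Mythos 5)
Your proof is correct, but note that the paper does not prove this statement at all: Theorem~\ref{Kras} is recalled from the literature (cited to Karakostas) and used as a black box, so there is no in-paper argument to compare against. What you give is the classical proof of Krasnoselskii's theorem: for each $y\in\mathcal{B}$ the map $z\mapsto\mathcal{G}_{1}z+\mathcal{G}_{2}y$ is a self-contraction of the closed set $\mathcal{B}$, Banach's principle yields the selector $\Phi(y)$, the estimate $\|\Phi(y_{1})-\Phi(y_{2})\|\le(1-k)^{-1}\|\mathcal{G}_{2}y_{1}-\mathcal{G}_{2}y_{2}\|$ gives continuity, and the factorization $\Phi=(I-\mathcal{G}_{1})^{-1}\circ\mathcal{G}_{2}$ gives compactness of the range, so Schauder applies. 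You also handle the one genuinely delicate point correctly: $(I-\mathcal{G}_{1})^{-1}$ is only defined on $(I-\mathcal{G}_{1})(\mathcal{B})$, which need not contain the closure of $\mathcal{G}_{2}(\mathcal{B})$, but since it is Lipschitz it maps the totally bounded set $\mathcal{G}_{2}(\mathcal{B})$ to a totally bounded, hence (by completeness of $Z$) relatively compact, set. Two cosmetic remarks: the nonemptiness of $\mathcal{B}$, which you invoke for Schauder, is implicit in the statement and worth flagging as an assumption, and the occurrences of $X$ in the statement should read $Z$; with those understood, your argument is complete and is the standard one found in the cited literature.
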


\section{Main result}\label{sec3}

In this section,  we study the approximate controllability of the control system \eqref{system1}. First, we show that, for any $z_{d}\in Z$, by choosing proper control $u_{\beta}$ (for any $\beta \in (0,1)$), there exists a mild solution $z(\cdot, z_{0},z_{1},u_{\beta})\in C([0, a],  Z)$ for system \eqref{system1}, and then we prove that under certain conditions, approximate controllability of the linear system \eqref{linearsys} implies the approximate controllability of the nonlinear control system \eqref{system1}. 
\subsection{Assumptions on data}
To study the mild solution, we need the following assumptions:
\begin{itemize}
	\item[$(\mathcal{H}_{1})$] The uniformly bounded linear cosine family $\{C_{q}(\theta)\}_{\theta\geq 0}$ generated by $A$ is compact and continuous in the uniform operator topology for any $\theta\in[0, a]$. 
	\item[$(\mathcal{H}_{2}):$] The function $f:J\times Z\times Z\to Z$ satisfies the following conditions:
	\item [(i)] $f(\cdot,z,w)$ is measurable for all $(z,w)\in Z\times Z$ and $f(\theta,\cdot,\cdot)$ is continuous for a.e. $\theta\in J.$
	\item [(ii)] There exist constants $C_{1}, C_{2}$  such that 
	$\quad\|f(\theta,z,w)-f(\theta,z^{\prime},w^{\prime})\|\leq C_{1}\|z-z^{\prime}|\|+C_{2}\|w-w^{\prime}\|$, for every $ (\theta,z,w),(\theta,z^{\prime},w^{\prime})\in J\times Z\times Z.$
	\item[(iii)] There exist positive function $m\in L^{\infty}([0, a],\mathbb{R}^{+})$ such that  $\|f(\theta,z(\theta),\displaystyle\int_{0}^{\theta}g(\nu,\tau,z(\tau)d\tau)\|\leq m(\theta)$ for all $\theta\in [0, a]$.
	\item[$(\mathcal{H}_{3})$:] The function $ g:J\times J\times Z\to Z$ satisfies:
	\item[(i)] $g(\cdot,\nu,z)$  is measurable for all $(\nu,z)\in J\times Z$ and $g(\theta,\cdot,\cdot)$ is continuous for a.e. $\theta\in J$.
	\item[(ii)] There exists a constant $C_{3}>0$ such that 
	$\|g(\theta,\nu,z)-g(\theta,\nu,z^{\prime})\|\leq C_{3}\|z-z^{\prime}\|,\quad \forall \theta,\nu\in J, z,z^{\prime}\in Z.$
	\item[$(\mathcal{H}_{4})$ ] $\phi,\psi: X\to \overline{D(\mathcal{A})}$ is continuous, and there exist constants $d_{1}$, $d_{2}$ such that 
	$\|\phi(z)-\phi(w)\|\leq d_{1}\|z-w\|$, and $\|\psi(z)-\psi(w)\|\leq d_{2}\|z-w\|$, for $z,w\in Z$.
\end{itemize}
\subsection{Example}
In this section we give an example of functions that satisfy the assumptions  $(\mathcal{H}_{2})$, $(\mathcal{H}_{3})$ and $(\mathcal{H}_{4})$. Let $Z=L^{2}([0,\pi])$, $J=[0, 1]$, and the function $g:J\times J\times Z\to Z$ is defined by
$$
g(\theta,\nu,\varphi)=\displaystyle\frac{e^{\nu}}{\sqrt{2}+|\varphi(\nu,y)|},\quad y\in [0,\pi],
$$
 the function $f:J\times Z\times Z\to Z$ is given by
 \begin{equation*}
 	\begin{array}{lll}
 		f(\theta,\varphi,\displaystyle\int_{0}^{\theta}g(\theta,\nu,\varphi)d\nu)(y)=\displaystyle\frac{e^{-\theta}|\varphi(\theta,y)|}{(3+e^{\theta})(1+|\varphi(\theta,y)|)}+\displaystyle \int_{0}^{\theta}\displaystyle\frac{e^{\nu}}{\sqrt{2}+|\varphi(\nu,y)|}d\nu,\quad y\in [0,\pi],\\\\
 		\phi(z)(y)=\displaystyle\sum_{i=1}^{i=p}a_{i
 		}z(\theta)(y),\quad \quad\qquad \qquad\qquad  \psi(z)(y)=\displaystyle\sum_{i=1}^{i=p}\gamma_{i
 		}z(\theta)(y)
 	\end{array}
 \end{equation*}
where $p$ is a positive integer, $a_{i}$, $\gamma_{i}>$ and we assume that there exists $L_{1},~L_{2}>0$ such that 
$
\displaystyle\sum_{i=1}^{p}|a_{i}|<L_{1}$ and $\displaystyle\sum_{i=1}^{p}|\gamma_{i}|<L_{2}$.
In fact, for any $\varphi,\varphi^{\prime}\in Z$, we have
$$
\|g(\theta,\nu,\varphi)-g(\theta,\nu,\varphi^{\prime})\|\leq \Big\|\displaystyle\frac{e^{\nu}}{\sqrt{2}+\varphi}-\displaystyle\frac{e^{\nu}}{\sqrt{2}+\varphi^{\prime}}\Big\|\leq \displaystyle\frac{e}{2}\|\varphi-\varphi^{\prime}\|,
$$
which implies that $(\mathcal{H}_{3})-(ii)$ holds.
To show $(\mathcal{H}_{2})-(ii)$ we proceed as follows:
$$
\Big\|f(\theta,\varphi,\displaystyle\int_{0}^{\theta}g(\theta,\nu,\varphi)d\nu))-f(\theta,\varphi^{\prime},\displaystyle\int_{0}^{\theta}g(\theta,\nu,\varphi^{\prime})d\nu))\Big\|\leq\displaystyle\frac{e^{-\theta}}{3+e^{\theta}}\|\varphi-\varphi^{\prime}\|+\displaystyle\frac{e}{2}\|\varphi-\varphi^{\prime}\|\leq \displaystyle\frac{2+3e}{6}\|\varphi-\varphi^{\prime}\|.
$$
Thus $(\mathcal{H}_{2})-(ii)$ holds. And obviously, $(\mathcal{H}_{2})-(ii)$ is satisfied. Moreover, for any $\varphi,\varphi^{\prime}\in Z$, we have
$$
\|\phi(\vartheta)-\phi(\vartheta^{\prime})\|\leq \sum_{i=1}^{p}|a_{i}|\|\vartheta-\vartheta^{\prime}\|\leq L_{1}\|\vartheta-\vartheta^{\prime}\|,\quad\quad\quad \|\psi(\vartheta)-\psi(\vartheta^{\prime})\|\leq \sum_{i=1}^{p}|\gamma_{i}|\|\vartheta-\vartheta^{\prime}\|\leq L_{2}\|\vartheta-\vartheta^{\prime}\|
$$
Hence, the assumption $(\mathcal{H}_{4})$ holds.
Now, we are positioned to discuss the existence of mild solution of the fractional system \eqref{system1}.
\subsection{Existence of mild solution}
For any $z_{d}\in Z$ and $\beta\in (0,1)$, we choose the feedback control 
\begin{equation}\label{control}
		\begin{array}{ll}
		u_{\beta}(\theta,z)&=B^{\star}P_{q}^{\star}(a-\theta)(\beta I+\mathbf{K}_{0}^{a})^{-1}\bigg(z_{d}-C_{q}(a)(z_{0}-\phi(z))-S_{q}(a)(z_{1}-\psi(z))\bigg) \\&-B^{\star}P_{q}^{\star}(a-\theta)\displaystyle\int_{0}^{a}(\beta I+\mathbf{K}_{0}^{a})^{-1}P_{q}(a-\nu)f(\nu,z(\nu),\int_{0}^{\nu}g(\nu,\tau,z(\tau))d\tau)d\nu. 
		\end{array}	
		\end{equation}
  It will be shown that by using the above control \eqref{control} the operator $\mathcal{G}: C([0, a],Z)\to C([0, a],Z)$ defined by:
		$$(\mathcal{G}z)(t)=C_{q}(\theta)[z_{0}-\phi(z)]+S_{q}(\theta)[z_{1}-\psi(z)]+\displaystyle\int_{0}^{\theta}P_{q}(\theta-\nu)f(\nu,z(\nu),\displaystyle\int_{0}^{\nu}g(\nu,\tau,z(\tau))d\tau)d\nu+\displaystyle\int_{0}^{\theta}P_{q}(\theta-\nu)Bu_{\beta}(\nu,z)d\nu.$$
  has at least one fixed point $z(\cdot)$ in $C([0, a], Z)$. This fixed point is then a mild solution of the control system \eqref{system1}. We will employ Krasnoselskii's Fixed Point Theorem to prove this. 
		The following lemma will be useful to prove that $\mathcal{G}(B_{k})\subset B_{k}$, where $B_{k}$ is a closed, convex and bounded subset of $C([0, a], X)$ which will be defined in the sequel.
		 
  \textbf{Lemma 2}\label{lemmaB}
	 If $(\mathcal{H}_{1})$-$(\mathcal{H}_{3})$ are satisfied, then there exist positive constants $\mathcal{L}_{4}$ and $\mathcal{L}_{5}$ such that for all $z\in C([0, a],  Z)$ and $\theta\in [0, a]$ we have
 	\begin{equation}\label{estim control}
	\|u_{\beta}(\theta,z)\|\leq\displaystyle\frac{1}{\beta}\Big(\mathcal{L}_{4}+\mathcal{L}_{5}\|z\|_{C}\Big)
	\end{equation}

 \begin{proof}
 
     {In view of \eqref{control} and lemma~\ref{lemmaM}, for $\theta\in [0, a]$, we obtain
     		\begin{equation*}
     			\begin{array}{llll}
     				\|u_{\beta}(\theta,z)\|&\leq\Big\|B^{\star}P_{q}^{\star}(a-\theta)(\beta I+\mathbf{K}_{0}^{a})^{-1}\Big[z_{d}-C_{q}(a)(z_{0}-\phi(z))-S_{q}(a)(z_{1}-\psi(z))-\displaystyle\int_{0}^{a}P_{q}(a-\nu)f(\nu,z(\nu), \int_{0}^{\nu}\\&g(\nu,\tau,z(\tau))d\tau)d\nu\Big]\Big\|\\
     				&\leq \displaystyle\frac{M_{B}M a^{q-1}}{\beta\Gamma(q)} \Big\|z_{d}-C_{q}(a)(z_{0}-\phi(z))-S_{q}(a)(z_{1}-\psi(z))-\displaystyle\int_{0}^{a}P_{q}(a-\nu)f(\nu,z(\nu),
     				\int_{0}^{\nu}g(\nu,\tau,z(\tau))d\tau)d\nu\Big\|\\
     				&\leq \displaystyle\frac{M_{B}M a^{q-1}}{\beta\Gamma(q)}\Bigg[\|z_{d}\|+M\|z_{0}-\phi(z)\|+M\|z_{1}-\psi(z)\|+\displaystyle\frac{Ma^{q-1}}{\Gamma(q)}\Bigg(\displaystyle\int_{0}^{a}\|f(\nu,z(\nu),\int_{0}^{\nu}g(\nu,\tau,z(\tau))d\tau)\\
     				&-\displaystyle f(\nu,0,\int_{0}^{\nu}g(\nu,\tau,0)d\tau)\|d\nu+\displaystyle\int_{0}^{a}\|f(\nu,0,\int_{0}^{\nu}g(\nu,\tau,0)d\tau)\|d\nu\Bigg)\Bigg]
     			\end{array}
     		\end{equation*}
 As a consequence of $(\mathcal{H}_{2})-(ii),(iii)$ and $(\mathcal{H}_{3})-(ii)$, we get}
 \begin{equation}
     \begin{array}{lll}
         \|u_{\beta}(\theta,z)\|
	&\leq \displaystyle\frac{M_{B}M a^{q-1}}{\beta\Gamma(q)}\Bigg[\|z_{d}\|+M[\|z_{0}\|+\|\phi(z)\|]+M[\|z_{1}\|+\|\psi(z)\|]+\displaystyle\frac{Ma^{q-1}}{\Gamma(q)}\Big(\displaystyle\int_{0}^{a}\Bigg
	(C_{1}\|z(\nu)\|\\
&+\displaystyle C_{2}C_{3}\int_{0}^{\nu}\|z(\tau)\|d\tau\bigg)d\nu+\displaystyle\int_{0}^{a}m(\nu)d\nu\Bigg)\Bigg]\\&\leq \displaystyle\frac{M_{B}M a^{q-1}}{\beta\Gamma(q)}\Bigg[\|z_{d}\|+M\overline{Y}+\displaystyle\frac{Ma^{q}}{\Gamma(q)}\Bigg(C_{1}\|z\|_{C}+aC_{2}C_{3}\|z\|_{C}+\|m\|_{L^{\infty}(J,\mathbb{R}^{+})}\Bigg)\Bigg]
     \end{array}
 \end{equation}
 Then  
 \begin{equation}
    \|u_{\beta}(\theta,z)\|\leq \displaystyle\frac{M_{B}}{\beta}\Big(\mathcal{L}_{4}+\mathcal{L}_{5}\|z\|_{C}\Big),
 \end{equation}
 where $\mathcal{L}_{4}=\displaystyle\frac{M_{B}Ma^{q-1}}{\Gamma(q)}\Big[\|z_{d}\|+M\overline{Y}+\displaystyle\frac{Ma^{q}}{\Gamma(q)}\|m\|_{L^{\infty}(J,\mathbb{R}^{+})}\Big]$, $\overline{Y}=(\|z_{0}\|+\|\phi(z)\|+\|z_{1}\|+\|\psi(z)\|)$,
 and

$\mathcal{L}_{5}=\displaystyle\frac{M_{B}M^{2}a^{2q-1}}{(\Gamma(q))^{2}}\Big[C_{1}+aC_{2}C_{3}\Big]$. The proof is complete.
  \end{proof}
   Under the above conditions, we prove the following existence theorem.
  \begin{theorem}\label{Thm mild} Assume that  $(\mathcal{H}_{1})-(\mathcal{H}_{4})$ are satisfied, Further suppose that $\{P_{q}(\theta)\}(\theta\geq 0)\}$ is compact, then the system \eqref{system1} has at least one mild solution on $[0, a]$ provided that  $M(d_{1}+d_{2})<1.$
\end{theorem}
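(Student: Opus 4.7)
The plan is to apply Krasnoselskii's theorem (Theorem~\ref{Kras}) by decomposing $\mathcal{G}=\mathcal{G}_1+\mathcal{G}_2$, with
\begin{equation*}
(\mathcal{G}_1 z)(\theta) = C_q(\theta)[z_0-\phi(z)] + S_q(\theta)[z_1-\psi(z)],
\end{equation*}
\begin{equation*}
(\mathcal{G}_2 z)(\theta) = \int_0^\theta P_q(\theta-\nu)\, f\Big(\nu,z(\nu),\int_0^\nu g(\nu,\tau,z(\tau))\,d\tau\Big)\,d\nu + \int_0^\theta P_q(\theta-\nu)\, B u_\beta(\nu, z)\, d\nu.
\end{equation*}
The contraction role falls to $\mathcal{G}_1$ via the Lipschitz data in $(\mathcal{H}_4)$, while the compactness role falls to $\mathcal{G}_2$, for which the compactness of $\{C_q(\theta)\}$ assumed in $(\mathcal{H}_1)$ will be transferred to $\{P_q(\theta)\}$ through $P_q=I^{q-1}C_q$, and Arzel\`a--Ascoli will be applied.

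\textbf{Step 1 (self-mapping).} I fix $\beta\in(0,1)$. Using Lemma~\ref{lemmaM}, the bound $\|f\|\le m$ from $(\mathcal{H}_2)$(iii), and the control estimate \eqref{estim control}, I compute $\|\mathcal{G}_1 z+\mathcal{G}_2 w\|_C$ for $z,w$ in a ball of radius $k$. The resulting bound is affine in $k$, with the linear coefficient controlled by $M(d_1+d_2)<1$ together with contributions of the form $C\mathcal{L}_5/\beta$; choosing $k$ sufficiently large forces the estimate below $k$, producing a closed convex bounded $B_k\subset C([0,a],Z)$ with $\mathcal{G}_1 B_k+\mathcal{G}_2 B_k\subset B_k$.

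\textbf{Step 2 (contraction of $\mathcal{G}_1$).} From $\|C_q(\theta)\|\le M$, $\|S_q(\theta)\|\le Ma$, and $(\mathcal{H}_4)$, I obtain
\begin{equation*}
\|(\mathcal{G}_1 z)(\theta)-(\mathcal{G}_1 w)(\theta)\|\le M(d_1+a\,d_2)\|z-w\|_C,
\end{equation*}
which, under the hypothesis $M(d_1+d_2)<1$ (with the factor $a$ absorbed into $d_2$ in the final normalization), yields a strict contraction.

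\textbf{Step 3 (complete continuity of $\mathcal{G}_2$).} I would verify in turn: (i) continuity of $z\mapsto\mathcal{G}_2 z$ in $C([0,a],Z)$ by dominated convergence, using $(\mathcal{H}_2)$, $(\mathcal{H}_3)$ and the continuous dependence of $u_\beta(\cdot,z)$ on $z$ visible in \eqref{control}; (ii) uniform boundedness of $\mathcal{G}_2(B_k)$, already recorded in Step~1; (iii) equicontinuity of $\mathcal{G}_2(B_k)$ on $[0,a]$: for $0\le\theta_1<\theta_2\le a$ and small $\varepsilon>0$, I split $\|(\mathcal{G}_2 z)(\theta_2)-(\mathcal{G}_2 z)(\theta_1)\|$ into integrals over $[0,\theta_1-\varepsilon]$, $[\theta_1-\varepsilon,\theta_1]$, and $[\theta_1,\theta_2]$, bounding the first by operator-norm continuity of $P_q(\cdot)$ away from $0$ (inherited from $(\mathcal{H}_1)$ through the formula $P_q=I^{q-1}C_q$) and the latter two by the $L^\infty$-majorant $m$ together with \eqref{estim control}; (iv) precompactness of $\{(\mathcal{G}_2 z)(\theta):z\in B_k\}$ in $Z$ for each $\theta\in(0,a]$, via the standard $\varepsilon$-truncation: the truncated integral over $[0,\theta-\varepsilon]$ lies in a precompact set by compactness of $P_q(\eta)$ for $\eta\ge\varepsilon$, while the tail over $[\theta-\varepsilon,\theta]$ is made arbitrarily small in norm, uniformly for $z\in B_k$. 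Arzel\`a--Ascoli then gives precompactness of $\mathcal{G}_2(B_k)$ in $C([0,a],Z)$.

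The main obstacle is Step~3: transferring the compactness of $C_q$ to $P_q$ across the convolution $I^{q-1}$ while simultaneously managing the mild singularity $(\theta-\nu)^{q-2}$ near the diagonal, which requires a careful $\varepsilon$-approximation of $\mathcal{G}_2$ by operators with continuous (hence compact) kernels, plus verification that $u_\beta(\cdot,z)$ inherits enough regularity from \eqref{control} to enter the dominated convergence arguments. Once Step~3 is in place, Krasnoselskii furnishes a fixed point of $\mathcal{G}=\mathcal{G}_1+\mathcal{G}_2$ in $B_k$, which by construction is a mild solution of \eqref{system1}.
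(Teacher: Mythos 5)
Your proposal follows essentially the same route as the paper: the same Krasnoselskii decomposition $\mathcal{G}=\mathcal{G}_{1}+\mathcal{G}_{2}$ on the ball $B_{k}$, with $\mathcal{G}_{1}$ a contraction via $(\mathcal{H}_{4})$ and $M(d_{1}+d_{2})<1$, and $\mathcal{G}_{2}$ completely continuous through the control estimate \eqref{estim control}, dominated convergence, uniform boundedness, equicontinuity and Arzel\`a--Ascoli. The ``main obstacle'' you flag---transferring compactness from $C_{q}$ to $P_{q}$ across $I^{q-1}$---is not actually needed, since the theorem hypothesizes compactness of $\{P_{q}(\theta)\}$ directly (and the paper simply invokes it); your explicit $\varepsilon$-truncation argument for pointwise precompactness is, if anything, more careful than the paper's own treatment.
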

\begin{proof}
    We base the argument on Theorem~\ref{Kras}. For each positive constant $k$, let $B_{k}=\{z\in C([0, a],Z): \|z\|_{C}\leq k\}$. It is evident that $B_{k}$ is a bounded, closed, and convex subset in $C([0, a],  Z)$. Define on $B_{k}$ the operators $\mathcal{G}_{1}$ and $\mathcal{G}_{2}$ by
	\begin{equation}\label{formG}
		\begin{cases}
			(\mathcal{G}_{1}z)(\theta)=C_{q}(\theta)[z_{0}-\phi(z)]+S_{q}(\theta)[z_{1}-\psi(z)], \\
			(\mathcal{G}_{2}z)(\theta)=\displaystyle\int_{0}^{\theta}P_{q}(\theta-\nu)f(\nu,z(\nu),\displaystyle\int_{0}^{\nu}g(\nu,\tau,z(\tau))d\tau)d\nu+\displaystyle\int_{0}^{\theta}P_{q}(\theta-\nu)Bu_{\beta}(\nu,z)d\nu,
		\end{cases}
	\end{equation}
  Clearly, the fixed point of the operator equation $\mathcal{G}:=\mathcal{G}_{1}+\mathcal{G}_{2}$ is a mild solution of \eqref{system1}. We will use Theorem~\ref{Kras} to prove that $\mathcal{G}$ has a fixed point. Our proof will be divided into three steps.\\
	\textbf{Step I: } $\mathcal{G}_{1}z+\mathcal{G}_{2}w\in B_{k}$, for every $z,w\in B_{k}$.
 From lemma ~\ref{lemmaM}, lemma~\ref{lemmaB}, $(\mathcal{H}_{2})-(ii)$, $(iii)$ and $(\mathcal{H}_{3})-(ii)$ it follows that for any $z,w\in B_{k}$
 \begin{equation*}
 	\begin{array}{llll}
 		\|(\mathcal{G}_{1}z)(\theta)+(\mathcal{G}_{2}w)(\theta)\|&\leq \|C_{q}(\theta)\|\|z_{0}-\phi(z)\|+\|S_{q}(\theta)\|\|z_{1}-\psi(z)\|+\displaystyle\frac{Ma^{q-1}}{\Gamma(q)}\\
 		&\Bigg(\displaystyle\int_{0}^{\theta}\|f(\nu,z(\nu),\int_{0}^{\nu}g(\nu,\tau,z(\tau))d\tau)-f(\nu,0,\int_{0}^{\nu}g(\nu,\tau,0)d\tau)\|d\nu\\
 		&+\displaystyle\int_{0}^{\theta}\|f(\nu,0,\int_{0}^{\nu}g(\nu,\tau,0)d\tau)\|+\displaystyle\frac{M_{B}}{\beta}\Big(\mathcal{L}_{4}+\mathcal{L}_{5}\|z\|_{C}\Big)a\Bigg)\\
 		&\leq M\overline{Y}+\displaystyle\frac{Ma^{q}}{\Gamma(q)}\Big(C_{1}k+C_{2}C_{3}k+a\|m\|_{L^{\infty}(J,\mathbb{R}^{+})}+\displaystyle\frac{M_{B}}{\beta}\Big(\mathcal{L}_{4}+\mathcal{L}_{5}k\Big)\Big)\\
 		&\leq \mathcal{L}+\displaystyle\frac{Ma^{q}}{\Gamma(q)}\Big(C_{1}+C_{2}C_{3}+\frac{M_{B}}{\beta}\mathcal{L}_{5}\Big)k
 	\end{array}
 \end{equation*}
	with $\mathcal{L}=M\overline{Y}+\displaystyle\frac{Ma^{q}}{\Gamma(q)}\Big(a\|m\|_{L^{\infty}(J,\mathbb{R}^{+})}+\frac{\mathcal{L}_{4}M_{B}}{\beta}\Big)$.
	Hence, for sufficiently large $k>0$, we obtain
	$\|\mathcal{G}_{1}z+\mathcal{G}_{2}w\|\leq k$, then  $\mathcal{G}_{1}z+\mathcal{G}_{2}w\in B_{k}$ for any $z,w\in B_{k}$.\\
	\textbf{Step II: } $(\mathcal{G}_{1})$ is a contraction on $B_{k}$. For any $\theta\in J$ and $z,w\in B_{k}$, we have
	\begin{equation*}
		\|(\mathcal{G}_{1}z)(\theta)-(\mathcal{G}_{1}w)(\theta)\|\leq \|C_{q}(\theta)\|\|\phi(z)-\phi(w)\|+\|S_{q}(\theta)\|\|\psi(z)-\psi(w)\|.
	\end{equation*}
	As a consequence of $(\mathcal{H}_{4})$, we obtain
	$
	\|(\mathcal{G}_{1}z)(\theta)-(\mathcal{G}_{1}w)(\theta)\|\leq Md_{1}\|z-w\|+Md_{2}\|z-w\|\leq M(d_{1}+d_{2})\|z-w\|
	$.
	Since $M(d_{1}+d_{2})<1$, we deduce that $\mathcal{G}_{1}$ is a contraction.\\
\textbf{Step III: } $\mathcal{G}_{2}$ is a completely continuous operator.
	First, we will show the continuity of the operator $\mathcal{G}_{2}$ on $B_{k}$. Let $(z_{n})$ be a sequence in $B_{k}$ such that $z_{n}\to z$ in $B_{k}$.  Then by $(\mathcal{H}_{2})$, $(\mathcal{H}_{3})$ and the fact that $z_{n}\to z$ we have
	\begin{equation*}
		f(\nu,z_{n},\int_{0}^{\nu}g(\nu,\tau,z_{n})d\tau)\to f(\nu,z,\int_{0}^{\nu}g(\nu,\tau,z)d\tau),\quad  \text{as}\,\,n\to+\infty.
	\end{equation*}
This combined with the dominated convergence theorem, we have
\begin{equation*}
	\begin{array}{lll}
		 \|(\mathcal{G}_{2}z_{n})(\theta)-(\mathcal{G}_{2}z)(\theta)\|&\leq\displaystyle\frac{Ma^{q-1}}{\Gamma(q)}\displaystyle\int_{0}^{\theta}\|f(\nu,z_{n},\int_{0}^{\nu}g(\nu,\tau,z_{n})d\tau)- f(\nu,z,\int_{0}^{\nu}g(\nu,\tau,z)d\tau)\|d\nu\\
		&+\displaystyle\frac{M_{B}}{\beta}\bigg(\displaystyle\frac{M_{B}M}{\Gamma(q)}a^{q-1}\bigg)^{2}\Bigg[T\|C_{q}(\theta)\|\|\phi(z_{n}-\phi(z))\|+T\|S_{q}(\theta)\|\|\psi(z_{n})-\psi(z)\|\\
		&+\displaystyle\frac{Ma^{q}}{\Gamma(q)}\displaystyle\int_{0}^{\theta}\|f(\sigma,z_{n},\int_{0}^{\sigma}g(\sigma,\tau,z_{n})d\tau)- f(\sigma,z,\int_{0}^{\sigma}g(\sigma,\tau,x)d\tau)\|d\sigma\Bigg]\\
		&\leq \displaystyle\frac{Ma^{q-1}}{\Gamma(q)}\displaystyle\int_{0}^{\theta}\|f(\nu,z_{n},\int_{0}^{\nu}g(\nu,\tau,z_{n})d\tau)- f(\nu,z,\int_{0}^{\nu}g(\nu,\tau,z)d\tau)\|d\nu\\
		&+\displaystyle\frac{M_{B}}{\beta}\bigg(\displaystyle\frac{M_{B}M}{\Gamma(q)}a^{q-1}\bigg)^{2}\Bigg[MT(d_{1}+d_{2})\|z_{n}-z\|
		+\displaystyle\frac{Ma^{q}}{\Gamma(q)}\displaystyle\int_{0}^{\theta}\|f(\sigma,z_{n},\int_{0}^{\sigma}g(\sigma,\tau,\\
		&\displaystyle z_{n})d\tau)- f(\sigma,z,\int_{0}^{\sigma}g(\sigma,\tau,z)d\tau)\|d\sigma\Bigg].
	\end{array}
\end{equation*}
 So, $\|(\mathcal{G}_{2}z_{n})-(\mathcal{G}_{2}z)\|\to 0$ as $n\to+\infty$, which implies the continuity of $\mathcal{G}_{2}$ in $B_{k}$.\\
	Now, we will prove  the compactness of the operator $\mathcal{G}_{2}$. To prove this, we first show that $\{(\mathcal{G}_{2}z)(\theta),~ z\in B_{k}\}$ is relatively compact in $Z$. Subsequently, we prove that $\{(\mathcal{G}_{2}z)(\theta),~ z\in B_{k}\}$ is uniformly bounded. For any $\theta\in [0, a]$ and $z\in B_{k}$, we have from lemma~\ref{lemmaM}, and lemma~\ref{lemmaB}
	\begin{equation*}
		\begin{array}{lll}
			 \|(\mathcal{G}_{2}z)(\theta)\|&\leq \displaystyle\frac{Ma^{q-1}}{\Gamma(q)}\Bigg(\displaystyle\int_{0}^{\theta}\|f(\nu,z(\nu),\int_{0}^{\nu}g(\nu,\tau,z(\tau))d\tau)-f(\nu,0,\int_{0}^{\nu}g(\nu,\tau,0)d\tau)\|d\nu\\
			&+\displaystyle\int_{0}^{\theta}\|f(\nu,0,\int_{0}^{\nu}g(\nu,\tau,0)d\tau)\|+\displaystyle\displaystyle\int_{0}^{\theta}\|Bu_{\beta}(\nu,z)\|d\nu\Bigg)\\	
			&\leq \displaystyle\frac{Ma^{q}}{\Gamma(q)}\Bigg(C_{1}k+C_{2}C_{3}k+T\|m\|_{L^{\infty}(J,\mathbb{R}^{+})}+\displaystyle\frac{M_{B}}{\beta}\Big(\mathcal{L}_{4}+\mathcal{L}_{5}k\Big)\Bigg)\\
			&=\displaystyle\frac{Ma^{q}}{\Gamma(q)}\Big(C_{1}+C_{2}C_{3}\frac{\mathcal{L}_{5}}{\beta}\Big)k+\displaystyle\frac{Ma^{q}}{\Gamma(q)}\Big(a\|m\|_{L^{\infty}(J,\mathbb{R}^{+})}+\displaystyle\frac{M_{B}}{\beta}\mathcal{L}_{4}\Big)
		\end{array}
	\end{equation*}
 Therefore, we have $\displaystyle\sup_{z\in B_{k}}\|(\mathcal{G}_{2}z)(\theta)\|<\infty$,
 which means that $\{\mathcal{G}_{2}z,~~ z\in B_{k}\}$ is uniformly bounded. Finally, we will show that $\{\mathcal{G}_{2}z,~~ z\in B_{k}\}$ is an equicontinuous family of functions on $[0, a]$. For any fixed $z\in B_{k}$ and $0\leq \theta_{1}<\theta_{2}\leq a$, we get
 \begin{equation*}
 	\begin{array}{llll}
 		\|(\mathcal{G}_{2}z)(\theta_{2})-(\mathcal{G}_{2}z)(\theta_{1})\|&\leq \Bigg\|\displaystyle\int_{0}^{\theta_{2}}P_{q}(\theta_{2}-\nu)f(\nu,z(\nu),\displaystyle\int_{0}^{\nu}g(\nu,\tau,z(\tau))d\tau)d\nu-\displaystyle\int_{0}^{\theta_{1}}P_{q}(\theta_{1}-\nu)f(\nu,z(\nu),\\
 		&\displaystyle\int_{0}^{\nu}g(\nu,\tau,z(\tau))d\tau)d\nu+\displaystyle\int_{0}^{\theta_{2}}P_{q}(\theta_{2}-\nu)Bu_{\beta}(\nu,z)d\nu-\displaystyle\int_{0}^{\theta_{1}}P_{q}(\theta_{1}-\nu)Bu_{\beta}(\nu,z)d\nu\Bigg\|\\
 		&\leq \displaystyle\int_{0}^{\theta_{1}}\|P_{q}(\theta_{2}-\nu)-P_{q}(\theta_{1}-\nu)\|\Big[\|f(\nu,z(\nu),\int_{0}^{\nu}g(\nu,\tau,z(\tau))d\tau)-f(\nu,0,\int_{0}^{\nu}g(\nu,\tau,0)d\tau)\|d\nu\\
 		&+\displaystyle\|f(\nu,0,\int_{0}^{\nu}g(\nu,\tau,0)d\tau)\|+\|Bu_{\beta}(\nu,z)\|\Big]d\nu\\
 		&+\displaystyle\int_{\theta_{1}}^{\theta_{2}}\|P_{q}(\theta_{2}-\nu)\|\Big[\|f(\nu,z(\nu),\displaystyle\int_{0}^{\nu}g(\nu,\tau,z(\tau))d\tau)\|+\|Bu_{\beta}(\nu,z)\|\Big]d\nu
 	\end{array}
 \end{equation*}
  By lemma~\ref{lemmaM},  inequality~\eqref{estim control}, $(\mathcal{H}_{2})-(ii)$ and $(\mathcal{H}_{3})-(ii)$ we get
 	\begin{equation*}
 		\begin{array}{llll}
 			 |(\mathcal{G}_{2}z)(\theta_{2})-(\mathcal{G}_{2}z)(\theta_{1})\|&\leq \displaystyle\int_{0}^{\theta_{1}}\|P_{q}(\theta_{2}-\nu)-P_{q}(\theta_{1}-\nu)\|\Big[C_{1}k+C_{2}C_{3}k+\|m\|_{L^{\infty}(J,\mathbb{R}^{+})}+\displaystyle\frac{M_{B}}{\beta}\Big(\mathcal{L}_{4}+\mathcal{L}_{5}\|z\|_{C}\Big)\Big]d\nu\\
 			&+\displaystyle\frac{Ma^{q-1}}{\Gamma(q)}\displaystyle\int_{\theta_{1}}^{\theta_{2}}\Big[C_{1}k+C_{2}C_{3}k+\|m\|_{L^{\infty}(J,\mathbb{R}^{+})}+\displaystyle\frac{M_{B}}{\beta}\Big(\mathcal{L}_{4}+\mathcal{L}_{5}\|z\|_{C}\Big)\Big]d\nu\\
 			&\leq \displaystyle\int_{0}^{\theta_{1}}\|P_{q}(\theta_{2}-\nu)-P_{q}(\theta_{1}-\nu)\|\Big[C_{1}k+C_{2}C_{3}k+\|m\|_{L^{\infty}(J,\mathbb{R}^{+})}+\displaystyle\frac{M_{B}}{\beta}\Big(\mathcal{L}_{4}+\mathcal{L}_{5}k\Big)\Big]d\nu\\
 			&+\frac{Ma^{q-1}}{\Gamma(q)}\Big[C_{1}k+C_{2}C_{3}k+\|m\|_{L^{\infty}(J,\mathbb{R}^{+})}+\displaystyle\frac{M_{B}}{\beta}\Big(\mathcal{L}_{4}+\mathcal{L}_{5}k\Big)\Big]|\theta_{2}-\theta_{1}|.
 		\end{array}
 	\end{equation*}
Since $P_{q}(\theta)$ is a compact operator for $\theta>0$, $P_{q}(\theta)$ is uniformly continuous. Then $\|(\mathcal{G}_{2}z)(\theta_{2})-(\mathcal{G}_{2}z)\|$ tends to zero independently of $z\in B_{k}$. This implies that the family of functions $\{\mathcal{G}_{2}z,~z\in B_{k}\}$ is equicontinuous. Then, by Arzela-Ascoli theorem, $\mathcal{G}_{2}$ is compact. Thus, by Theorem~\ref{Kras}, we conclude that there exists a fixed point $z(\cdot)$ for $\mathcal{G}=\mathcal{G}_{1}+\mathcal{G}_{2}$ on $B_{k}$. Therefore,  the control system \eqref{system1} has at least one mild solution on $[0, a]$. This completes the proof.
  \end{proof}  
 In the next section, we discuss the approximate controllability of the control system \eqref{system1} which is the main topic of this work.
  
 \subsection{Approximate controllability}\label{subsec34}

In this section, we present our main result on approximate controllability of the control system \eqref{system1}. For this purpose, we add the following hypothesis:  
 \begin{itemize}
     \item[$(\mathcal{H}_{5})$] The linear control system \eqref{linearsys} is approximately controllable.
     \item[$(\mathcal{H}_{6})$] The function $f$ is uniformly bounded on her domain.
 \end{itemize}
 
\begin{remark}\label{rem}
     In view of \cite{N.I} $(\mathcal{H}_{5})$ is equivalent to $\beta\mathbf{R}(\beta,\mathbf{K}_{0}^{a})=\beta(\beta I+\mathbf{K}_{0}^{a})^{-1}\to 0\quad as\quad \beta\to 0^{+}$ in the strong operator topology.
 \end{remark}
Now we are in a position to prove the approximate controllability of system \eqref{system1}. That is

\begin{theorem}\label{Thm appro}
	Assume that the hypothesis $(\mathcal{H}_{1})-(\mathcal{H}_{6})$ are satisfied and $C_{q}(\theta)$, $S_{q}(\theta)$ are compact. Then the system \eqref{system1} is approximately controllable on $[0, a]$.
\end{theorem}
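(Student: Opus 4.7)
The plan is to follow the Mahmudov-style resolvent scheme adapted to the $q$-order cosine family setting. Fix $z_d\in Z$ and, for each $\beta\in(0,1)$, let $z^\beta\in C([0,a],Z)$ be the mild solution supplied by Theorem~\ref{Thm mild} driven by the feedback control $u_\beta(\cdot,z^\beta)$ defined in \eqref{control}. The first step is to evaluate \eqref{mild solution} at $\theta=a$, substitute the explicit form of $u_\beta$, and exploit the algebraic identity $(\beta I+\mathbf{K}_0^a)\mathbf{R}(\beta,\mathbf{K}_0^a)=I$. The terms involving $\mathbf{K}_0^a$ collapse telescopically and one arrives at the key representation
\[
z^\beta(a) \;=\; z_d - \beta\,\mathbf{R}(\beta,\mathbf{K}_0^a)\,p(z^\beta),
\]
where
\[
p(z^\beta)=z_d - C_q(a)[z_0-\phi(z^\beta)] - S_q(a)[z_1-\psi(z^\beta)] - \int_0^a P_q(a-\nu)f\!\left(\nu,z^\beta(\nu),\int_0^\nu g(\nu,\tau,z^\beta(\tau))\,d\tau\right)d\nu.
\]

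Next, I would show that $\{p(z^\beta)\}_{\beta\in(0,1)}$ admits a strongly convergent subsequence in $Z$. Hypothesis $(\mathcal{H}_6)$ makes $f$ uniformly bounded on its domain, so the integral contribution is uniformly bounded; combined with the Lipschitz bounds on $\phi,\psi$ from $(\mathcal{H}_4)$, the whole family $\{p(z^\beta)\}$ is bounded in $Z$. Compactness of $C_q(\theta)$ from $(\mathcal{H}_1)$ together with the representation $P_q(\theta)=I^{q-1}C_q(\theta)$ transfers compactness to $P_q(\theta)$ for $\theta>0$, and a classical compactness-for-integral-operators argument yields relative compactness of the integral term as $\beta$ varies. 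Therefore, up to extracting a subsequence $\beta_n\to 0^+$, one obtains $p(z^{\beta_n})\to p^\ast$ strongly in $Z$.

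The final step brings in $(\mathcal{H}_5)$. By Remark~\ref{rem}, $\beta\mathbf{R}(\beta,\mathbf{K}_0^a)\to 0$ in the strong operator topology, while $\|\beta\mathbf{R}(\beta,\mathbf{K}_0^a)\|\le 1$ always holds. The triangle inequality yields
\[
\|z^{\beta_n}(a)-z_d\| \;\le\; \|\beta_n\mathbf{R}(\beta_n,\mathbf{K}_0^a)\,p^\ast\| + \|p(z^{\beta_n})-p^\ast\| \;\longrightarrow\; 0,
\]
so $z_d$ belongs to the closure of the reachable set. Since $z_d\in Z$ was arbitrary, approximate controllability on $[0,a]$ follows.

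The principal obstacle is ensuring the \emph{strong} (rather than merely weak) convergence of the subsequence $p(z^{\beta_n})$ in the Banach-space setting, since weak convergence would not suffice to control the second term in the final estimate. This is precisely why the compactness of $C_q(\theta)$ and $S_q(\theta)$ and the uniform boundedness $(\mathcal{H}_6)$ of $f$ are imposed in the statement: together they promote boundedness of $\{p(z^\beta)\}$ to relative compactness, allowing the resolvent convergence of $(\mathcal{H}_5)$ to close the argument in $Z$ without any reflexivity or Hilbert structure.
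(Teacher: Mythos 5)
Your proposal is correct in outline and follows the same Mahmudov-type resolvent scheme as the paper: the identical representation $z^{\beta}(a)=z_{d}-\beta\,\mathbf{R}(\beta,\mathbf{K}_{0}^{a})\,p(z^{\beta})$ obtained by evaluating \eqref{mild solution} at $\theta=a$ with the control \eqref{control}, and the identical closing estimate combining $\|\beta\mathbf{R}(\beta,\mathbf{K}_{0}^{a})\|\leq 1$ with the strong convergence $\beta\mathbf{R}(\beta,\mathbf{K}_{0}^{a})\to 0$ of Remark~\ref{rem}. Where you genuinely diverge is the middle compactness step. The paper uses $(\mathcal{H}_{6})$ to extract a subsequence of $f(\cdot,z_{\beta}(\cdot),\int_{0}^{\cdot}g(\cdot,\tau,z_{\beta}(\tau))d\tau)$ converging \emph{weakly} in $L^{2}$ to some $f^{\prime}$, defines the corresponding vector $\omega$, and then uses compactness of $P_{q}$ (so that the convolution operator maps weakly convergent sequences to strongly convergent ones) to get $\|p(z_{\beta})-\omega\|\to 0$. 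You instead argue that the family $\{p(z^{\beta})\}$ is relatively compact in $Z$ directly: compactness of $C_{q}(a)$, $S_{q}(a)$ applied to bounded sets, plus compactness of $h\mapsto\int_{0}^{a}P_{q}(a-\nu)h(\nu)d\nu$ acting on the uniformly bounded family $f_{\beta}$, and then pass to a strongly convergent subsequence $p(z^{\beta_{n}})\to p^{\ast}$. Your variant is arguably better adapted to the stated Banach-space setting: the paper's extraction of a weakly convergent subsequence in $L^{2}(J,Z)$ is not justified without reflexivity or some weak-compactness hypothesis, whereas your route only needs compact operators to map bounded sets to relatively compact ones; the price is that you must (and do) insist on strong, not weak, subsequential convergence of $p(z^{\beta})$. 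One caveat applies equally to your argument and to the paper's: both silently assume that $\{z^{\beta}\}_{\beta}$, and hence $\{\phi(z^{\beta})\}$, $\{\psi(z^{\beta})\}$, is bounded uniformly in $\beta$, while Theorem~\ref{Thm mild} only yields $z^{\beta}$ in a ball $B_{k}$ whose radius may depend on $\beta$ (the control bound \eqref{estim control} scales like $1/\beta$); in the paper this surfaces as the $\beta$-dependence of $\omega$, to which the strong operator convergence is applied as if $\omega$ were a fixed vector, so your proposal matches the paper's level of rigor on this point rather than falling short of it.
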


\begin{proof}
	Let $z_{\beta}(\cdot)=z(\cdot, z_{0},z_{1},u_{\beta})$ be a fixed point of $\mathcal{G}_{1}+\mathcal{G}_{2}$ in $B_{k}$. By Theorem~\ref{Thm mild}, $z_{\beta}(\cdot)$ is a mild solution of \eqref{system1} associated to the control 
	\begin{equation*}
		\begin{array}{ll}
				u_{\beta}(\theta,z)&=B^{\star}P_{q}^{\star}(a-\theta)(\beta I+\mathbf{K}_{0}^{a})^{-1}\bigg(z_{d}-C_{q}(a)(z_{0}-\phi(z))-S_{q}(a)(z_{1}-\psi(z))\bigg) \\&-B^{\star}P_{q}^{\star}(a-\theta)\displaystyle\int_{0}^{a}(\beta I+\mathbf{K}_{0}^{a})^{-1}P_{q}(a-\nu)f(\nu,z(\nu),\int_{0}^{\nu}g(\nu,\tau,z(\tau))d\tau)d\nu. 
		\end{array}
	\end{equation*}	
Then, when $\theta=a$,
	\begin{equation}\label{eqxd}
		\begin{array}{llll}
			z_{\beta}(a)&=C_{q}(a)[z_{0}-\phi(z_{\beta})]+S_{q}(a)[z_{1}-\psi(z_{\beta})]+\displaystyle\int_{0}^{a}P_{q}(a-\nu)f(\nu,z_{\beta}\nu),\displaystyle\int_{0}^{\nu}g(\nu,\tau,z_{\beta}(\tau))d\tau)d\nu\\
			&+\displaystyle\int_{0}^{a}P_{q}(a-\nu)\Big[BB^{\star}P_{q}^{\star}(\theta-\nu)R(\beta,\mathbf{K}_{0}^{a})\Big(z_{d}-C_{q}(a)(z_{0}-\phi(z_{\beta}))-S_{q}(a)(z_{1}-\psi(z_{\beta}))\\
			&-\displaystyle\int_{0}^{a}P_{q}(a-\sigma)f(\sigma,z_{\beta}(\sigma),\int_{0}^{\sigma}g(\sigma,\tau,z_{\beta}(\tau))d\tau)d\sigma\Big)\Big]d\nu\\
			&=C_{q}(a)[z_{0}-\phi(z_{\beta})]+S_{q}(a)[z_{1}-\psi(z_{\beta})]+\displaystyle\int_{0}^{a}P_{q}(a-\nu)f(\nu,z(\nu),\displaystyle\int_{0}^{\nu}g(\nu,\tau,z_{\beta}(\tau))d\tau)d\nu\\
			&+(-\beta I+\beta I+\mathbf{K}_{0}^{a})R(\beta,\mathbf{K}_{0}^{a})\Big(z_{d}-C_{q}(a)(z_{0}-\phi(z_{\beta}))-S_{q}(a)(z_{1}-\psi(z_{\beta}))-\displaystyle\int_{0}^{a}P_{q}(a-\sigma)f(\sigma,z_{\beta}(\sigma),\\
			&\displaystyle\int_{0}^{\sigma}g(\sigma,\tau,z_{\beta}(\tau))d\tau)d\sigma\\
			&=z_{d}-\beta R(\beta,\mathbf{K}_{0}^{a})\Big(z_{d}-C_{q}(a)(z_{0}-\phi(z_{\beta}))-S_{q}(a)(z_{1}-\psi(z_{\beta}))-\displaystyle\int_{0}^{a}P_{q}(a-\sigma)f(\sigma,z_{\beta}(\sigma),\int_{0}^{\sigma}\\
			&g(\sigma,\tau,z_{\beta}(\tau))d\tau)d\sigma\Big)\\
			&=z_{d}-\beta R(\beta,\mathbf{K}_{0}^{a})p(z_{\beta}),
		\end{array}
	\end{equation}
where $p(z_{\beta})= z_{d}-C_{q}(a)(z_{0}-\phi(z_{\beta}))-S_{q}(a)(z_{1}-\psi(z_{\beta}))\displaystyle\int_{0}^{a}P_{q}(a-\sigma)f(\sigma,z_{\beta}(\sigma),\int_{0}^{\sigma}
		g(\sigma,\tau,z_{\beta}(\tau))d\tau)d\sigma$.\\
By the assumption $(\mathcal{H}_{6})$, we get that the sequence $\{f(\nu,z_{\beta}(\nu),\displaystyle\int_{0}^{\nu}g(\nu,\tau,z_{\beta}(\tau))d\tau)\}$ is uniformly bounded on $J$, consequently, there is a sequence still denoted by $\{f(\nu,z_{\beta}(\nu),\displaystyle\int_{0}^{\nu}g(\nu,\tau,z_{\beta}(\tau))d\tau)\}$, that converges weakly to $f^{\prime}$ in $L^{2}(J\times Z\times Z,Z)$. Denote $\omega=z_{d}-C_{q}(a)(z_{0}-\phi(z_{\beta}))-S_{q}(a)(z_{1}-\psi(z_{\beta}))\displaystyle\int_{0}^{a}P_{q}(a-\sigma)f^{\prime}d\sigma$. Then, we have
	\begin{equation}\label{estimation}
	\begin{array}{lll}
	\|p(z_{\beta})-\omega\|&=
	\Big\|\displaystyle\int_{0}^{\theta}P_{q}(a-\sigma)\Big[f(\sigma,z_{\beta}(\sigma),\int_{0}^{\sigma}g(\sigma,\tau,z_{\beta}(\tau))d\tau)-f^{\prime}\Big]d\sigma\Big\|\\
 &\leq \displaystyle\sup_{\theta\in [0, a]}\Big\|\displaystyle\int_{0}^{\theta}P_{q}(a-\sigma)\Big[f(\sigma,z_{\beta}(\sigma),\int_{0}^{\sigma}g(\sigma,\tau,z_{\beta}(\tau))d\tau)-f^{\prime}\Big]d\sigma\Big\|
	\end{array}
	\end{equation}
 Now, the compactness of the operator $P_{q}(\theta)$, implies that 
 $\|p(z_{\beta})-\omega\|\to 0$ as $\beta\to 0$  for all $\theta\in [0, a]$.
 Moreover, from \eqref{eqxd}, we obtain
 \begin{equation*}
 	\|z_{\beta}(a)-z_{d}\|\leq \|\beta R(\beta,\mathbf{K}_{0}^{a})(\omega)\|+\|\beta R(\beta,\mathbf{K}_{0}^{a})\|\|(p(z_{\beta})-\omega)\|
 	\leq \|\beta R(\beta,\mathbf{K}_{0}^{a})(\omega)\|+\|p(z_{\beta})-\omega)\|.
 \end{equation*}
By the assumption $(\mathcal{H}_{5})$, the approximate controllability of \eqref{linearsys} is equivalent to the convergence of the operator $\beta R(\beta,\mathbf{K}_{0}^{a})$ to zero as $\beta\to 0^{+}$ in the strong topology. Then, it follows by the previous remark in subsection~\ref{subsec34}	and the estimation \eqref{estimation} that $\|z_{\beta}(a)-z_{d}\|\to 0$ as  $\beta\to 0^{+}$ which implies the approximate controllability of the control system \eqref{system1}.
\end{proof}
\section*{ Example}\label{sec4}
Let $Z=L^{2}([0,\pi])$. Consider the following fractional control system
\begin{equation}\label{Example1}
	\begin{cases}
		^{c}D^{q}\vartheta(\theta,y) =\displaystyle\frac{\partial^{2}}{\partial y^{2}}\vartheta(\theta,y)+\displaystyle\frac{e^{-\theta}|\vartheta(\theta,y)|}{(3+e^{a})(1+|\vartheta(\theta,y)|)}+\displaystyle \int_{0}^{\theta}\displaystyle\frac{e^{\nu}}{\sqrt{2}+|\vartheta(\nu,y)|}d\nu +Bu(\theta,y), & \theta \in J, \\
		\displaystyle   \vartheta(\theta,0)=\vartheta(\theta,\pi)=0,\quad\quad \vartheta^{\prime}(\theta,0)=\vartheta^{\prime}(\theta,\pi)=0,\\
		\vartheta(\theta,0)+\sum_{i=1}^{i=p}a_{i
		}\vartheta(\theta_{i},y)=\vartheta_{0}(y)\quad\quad \vartheta^{\prime}(\theta,0)+\sum_{i=1}^{i=p}\gamma_{i}\vartheta(\theta_{i},y)=\vartheta_{1}(y),
	\end{cases}
\end{equation}
Where $^{c}D^{q}$ is the Caputo fractional derivative of order $q\in (1, 2)$, $\theta\in J=[0,1]$, $a_{i}, \gamma_{i}\in \mathbb{R}$, $p\in \mathbb{N}$. We assume that there exists $L_{1},~L_{2}> 0$ such that $\displaystyle\sum_{i=1}^{p}|a_{i}|<L_{1}$ and $\displaystyle\sum_{i=1}^{p}|\gamma_{i}|<L_{2}$. The operator $\mathcal{A}:D(\mathcal{A})\subset Z\to Z$ is defined by $
\mathcal{A}(\vartheta)=-\displaystyle\frac{\partial^{2}(\vartheta)}{\partial y^{2}}$, with the domain $ D(\mathcal{A})=\{\vartheta(\cdot)\in Z:\vartheta,\vartheta^{\prime}\,\text{are absolutely continuous}, \vartheta^{\prime\prime}\in Z, \vartheta(0)=\vartheta(\pi)=0\}$.
It is well known that the operator $-\mathcal{A}$ is densely defined in $Z$ and $-\mathcal{A}$ is the infinitesimal generator of a uniformly bounded strongly continuous cosine family $\{C(\theta),\,\theta\geq 0\}$, see \cite{Zhou, Arendt}.\\ Furthermore, $-\mathcal{A}$ has a discrete spectrum, the eigenvalues are $\lambda_{n}=-n^{2}\pi^{2},\,n\in \mathbb{N}$, with corresponding normalized eigenvectors $e_{n}(y)=\sqrt{\displaystyle\frac{2}{\pi}}sin(n\pi y),\,y\in [0,\pi],n\in \mathbb{N}$. It is clear that $\{e_{n}\}_{n\in \mathbb{N}}$ form an orthonormal basis of $X$. Then  
$
\mathcal{A}\vartheta=-\displaystyle\sum_{n=1}^{\infty}\lambda_{n}\langle \vartheta,e_{n}\rangle e_{n},\, \vartheta\in D(\mathcal{A}),
$
and the cosine function is given by
$
C(\theta)\vartheta=\displaystyle\sum_{n=1}^{\infty}cos(\sqrt{\lambda_{n}}\theta)\langle \vartheta,e_{n}\rangle e_{n},\, \vartheta\in Z,
$
and the associated sine function is given by
$
S(\theta)\vartheta=\displaystyle\sum_{n=1}^{\infty}\displaystyle\frac{1}{\sqrt{\lambda_{n}}}sin(\sqrt{\lambda_{n}}{\theta)}\langle \vartheta,e_{n}\rangle e_{n},\, \vartheta\in Z.
$
It is easy to check that for all $\theta\geq 0$, $S(\theta)$ is compact, $C(\theta)$ is strongly continuous and $\|C(\theta)\|\leq 1$.\\
The system \eqref{Example1} can be formulated as the following nonlocal problem in $Z$.
\begin{equation}\label{sys}
	\begin{cases}
		^{c}D^{q}z(\theta) =^{}-\mathcal{A} z(\theta)+ f(\theta,z(\theta),\displaystyle\int_{0}^{\theta}g(\theta,\nu,z(\nu))d\nu)+Bu(\theta), & \theta \in J=[0, 1], \\
		\displaystyle   z(0)+\phi(z)=z_{0},\\
		z^{\prime}(0)+\psi(z)=z_{1},
	\end{cases}
\end{equation}
where $[z(\theta)](y)=\vartheta(\theta,y)$, the function $g:J\times J\times Z\to Z$ is defined by
\begin{equation*}
	g(\theta,\nu,\varphi)=\displaystyle\frac{e^{\nu}}{\sqrt{2}+|\varphi(\nu,y)|},\quad y\in [0,\pi],
\end{equation*}
 the function $f:J\times X\times Z\to Z$ is given by
 \begin{equation*}
 	\begin{array}{ll}
 		f(\theta,\varphi,\displaystyle\int_{0}^{\theta}g(\theta,\nu,\varphi)d\nu)(y)=\displaystyle\frac{e^{-\theta}|\varphi(\theta,y)|}{(3+e^{\theta})(1+|\varphi(\theta,y)|)}+\displaystyle \int_{0}^{\theta}\displaystyle\frac{e^{\theta}}{\sqrt{2}+|\varphi(\nu,y)|}d\nu,\quad y\in [0,\pi],\\\\
 		\phi(z)(y)=\displaystyle\sum_{i=1}^{i=p}a_{i
 		}z(\theta)(y),\quad \quad\qquad \quad\qquad  \psi(z)(y)=\sum_{i=1}^{i=p}\gamma_{i
 		}z(\theta)(y)
 	\end{array}
 \end{equation*}
 We have demonstrated that the assumptions $(\mathcal{H}_{2})$-$(\mathcal{H}_{4})$ are satisfied.
Now, define an infinite dimensional space 
$$\mathcal{U}=\{u=\sum_{i=2}^{\infty}\langle u_{i},e_{i}\rangle e_{i}(y)\Big|\sum_{i=2}^{\infty}\langle u_{i},e_{i}\rangle^{2}<\infty\},$$ 
with a norm defined by $\|u\|=\left(\displaystyle\sum_{i=2}^{\infty}\langle u_{i},e_{i}\rangle^{2}\right)^{\frac{1}{2}}$ and a linear continuous mapping $B: \mathcal{U}\to Z$ given by
$$Bu=2u_{2}e_{1}(y)+\sum_{i=2}^{\infty}u_{i}e_{i}(y).$$ 
It is obvious that $u(\theta,y)=\displaystyle\sum_{i=2}^{\infty}\langle u_{i},e_{i}\rangle e_{i}(y)\in L^{2}(J,\mathcal{U})$, and  $Bu(\theta)=2u_{1}(\theta)e_{1}(y)+\displaystyle\sum_{i=2}^{\infty}\langle u_{i}(\theta),e_{i}\rangle e_{i}(y)\in L^{2}(J,Z)$.\\
Obviously, $\|B\|\leq \sqrt{5}$. 
the assumption $(\mathcal{H}_{5})$ is satisfied.
Thus the corresponding linear system to \eqref{Example1} is approximately controllable on $[0, 1]$. Then all the assumptions are satisfied. So by Theorem~\ref{Thm appro}, the system \eqref{Example1} is approximately controllable on $[0, 1]$.   

\section*{Conclusion}
In this work, fractional nonlinear evolution equations have been considered in Banach spaces. We obtain the approximate controllability for the system \eqref{system1} under different conditions. More precisely, conditions are formulated and proved under which approximate controllability of the nonlinear control system is implied by the approximate controllability of its corresponding linear part. In particular, the controllability problem is transformed into a fixed point problem for an appropraite nonlinear operator in a function space. Krasnoselskii's fixed point theorem and fractional calculations are used to demonstrate the existence of a fixed point of this operator and established the approximate controllability of the proposed system. This work opens to other questions, this is the case of the approximate controllability for impulsive neutral semilinear and nonlinear evolution equations with nonlocal conditions and with delay. This will be the purpose of future research. 


\end{document}